\documentclass[12pt]{amsart}
\usepackage[utf8]{inputenc}
\usepackage{amsmath}
\usepackage{amssymb}
\usepackage{amsthm}
\usepackage{hyperref}
\usepackage{authblk}
\usepackage{doi}

\usepackage{xcolor}



\newtheorem{theorem}{Theorem}
\newtheorem{lemma}{Lemma}
\newtheorem{prop}{Proposition}

\theoremstyle{definition}
\newtheorem{defn}{Definition}


\begin{document}
\begin{center}
\uppercase{\bf \bf{Minimally Intersective Polynomials with Arbitrarily Long Factorization}}
\vskip 20pt
{\bf Bhawesh Mishra \footnote{The author was partly supported by the NSF, under grant DMS-1812028.}}\\
{ Department of Mathematics, The Ohio State University, Columbus, OH USA}\\
{\tt mishra.188@osu.edu}\\
\end{center}
\vskip 20pt

\centerline{\bf Abstract}
\noindent
Given a natural number $n \geq 4$, we show that there exist infinitely many polynomials $f_{n}(x):= (x^{2} - a_{1}) (x^{2} - a_{2}) \cdots (x^{2} - a_{n})$ such that $(i)$ $f_{n}(x)$ has a root modulo every positive integer, $(ii)$ $f_{n}(x)$ has no rational roots, and $(iii)$ every proper divisor of $f_{n}(x)$ fails to have a root modulo some positive integer. We will call such polynomials \textit{minimally intersective}. Our proof also shows that once $a_{1}, a_{2}, \ldots, a_{n-1}$ are chosen, the set of natural numbers $a_{n}$ such that the polynomial $f_{n}(x):= (x^{2} - a_{1}) (x^{2} - a_{2}) \cdots (x^{2} - a_{n})$ is minimally intersective has positive asymptotic density in $\mathbb{N}$.

\section{Introduction}
A set $S \subset\mathbb{Z}$ is called intersective if given any set T $\subset\mathbb{Z}$ with positive upper density, one has $S$ $\cap$ $(T - T) \not\subseteq \{0\}$. Given a $T \subset\mathbb{Z}$, $(T - T)$ is defined as $\{ t_{1} - t_{2} : t_{1}, t_{2} \in T\}$ and the upper density of $T$ is defined as
\begin{equation*}
\overline{d} (T) := \limsup_{n\rightarrow\infty} \frac{|T \cap \{-n,\ldots , -2,-1, 0, 1, 2, \ldots , n\}|}{2n+1}.
\end{equation*} 
A polynomial $f(x_{1}, \ldots , x_{m}) \in\mathbb{Z} [x_{1}, \ldots , x_{m}]$ is said to be intersective if the set of its values $\{ f(x_{1}, \ldots , x_{m}) : x_{1}, \ldots , x_{m}\in\mathbb{Z} \}$ is intersective. S\'ark\"ozy and Furstenberg independently and concurrently proved, in \cite{Sa1} and \cite{Fu} respectively, that for any subset $T$ of integers with positive upper density, the set $(T - T)$ contains a perfect square. In other words, they proved that the polynomial $f(x) = x^{2}$ is intersective. 

Kamae and Mend\'es-France, in \cite{KaMF}, showed that a polynomial $f$ of one variable is intersective if and only if $f(x) \equiv 0\hspace{1mm} (\text{mod } m)$ is solvable for every positive integer $m > 1$. A very special case of the polynomial Szemer\'edi's theorem, obtained by Bergelson, Leibman and Lesigne in \cite{BLL}, generalizes this fact to the polynomials of many variables. Their result implies that a polynomial $g(x_{1}, \ldots , x_{k}) \in\mathbb{Z}[x_{1}, \ldots , x_{k}]$ is intersective if and only if the congruence $g(n_{1}, n_{2}, \ldots , n_{k}) \equiv 0 \hspace{1mm} (\text{mod } m)$ is solvable for every $m > 1$.

Berend and Bilu, in \cite{BerBil}, obtained a criterion for any polynomial $f$ of one variable to be intersective. An implication of their result is that any polynomial in one variable that is intersective, but has no rational root, has to be of degree greater than 4. On the other hand, there are single-variable intersective polynomials of degree greater than $4$ that have no rational roots. Hyde, Lee and Spearman obtained an infinite family of intersective polynomials of the form
\begin{gather*}
h(x) = (x^{3} - n) (x^{2} + 3), 
\end{gather*}
none of which have rational roots \cite{HLS}. One can easily show that if $p, q$ are distinct odd primes such that $p \equiv q \equiv 1 \hspace{1mm} (\text{mod } 4)$ and $\big(\frac{p}{q}\big) = +1$ then the polynomial
\begin{gather*}
f(x) = (x^{2} - p) (x^{2} - q) (x^{2} - pq)
\end{gather*}
is intersective. Hyde and Spearman generalized this result to the case when $p$ and $q$ are replaced by square-free integers. Let $c$ and $d$ be square-free integers not equal to $1$, let $c_{1} = \frac{c}{\text{gcd}(c,d)}$ and $d_{1} = \frac{d}{\text{gcd}(c,d)}$. Hyde and Spearman obtained necessary and sufficient conditions for the polynomial \begin{gather*}
    p(x) = (x^{2} - c) (x^{2} - d) (x^{2} - c_{1}d_{1})
\end{gather*}
to be intersective but have no rational root \cite{HS}. This result was extended in \cite{Mishra} by obtaining a necessary and sufficient condition for polynomials of the form
\begin{gather*}
    f(x) = (x^{2} - a_{1}) (x^{2} - a_{2}) \cdots (x^{2} - a_{n})
\end{gather*}
to be intersective without having a rational root (see Proposition $1$ below). Here $n \geq 3$ and $a_{1}, a_{2}, \ldots, a_{n}$ are square-free integers not equal to $1$. 

Note that if $f(x)$ is an intersective polynomial then for every polynomial $g(x)$, $f(x) g(x)$ is also intersective. Therefore, one could always get more examples of intersective polynomials by multiplying a given intersective polynomial $f(x)$ by any polynomial $g(x)$. The topic of this article is to show existence of, and construct, a particular type of intersective polynomial, which is defined below.  
\begin{defn}
A polynomial $f(x) \in\mathbb{Z} [x]$ is said to be \textit{minimally intersective} if it satisfies the following two conditions:
\begin{enumerate}
    \item $f(x)$ is intersective but $f(x)$ does not have a rational root.
    
    \item None of the proper divisors of $f(x)$ is intersective.
\end{enumerate}
\end{defn}
Minimally intersective polynomials can be thought of as genuinely new examples of intersective polynomials because they are not obtained by adjoining factors to an already intersective polynomials. The result in this article shows that for every $n \geq 4$, there exist minimally intersective polynomials with $n$ quadratic factors. 

Given an integer $n$ and a prime $p$ we will denote the group of quadratic residues modulo $p$ by $Q_{p}$ and the Legendre symbol of $n$ with respect to $p$ by $\big(\frac{n}{p}\big)$. Similarly, $p^{a} \mid\mid n$ $(a \geq 1)$ will denote that $p^{a}$ is the highest power of prime $p$ dividing the integer $n , (a,b)$ will denote the greatest common divisor of two natural numbers $a$ and $b$, and rad$(x)$ will denote the square-free part of natural number $x$. The asymptotic density of a set $A \subset\mathbb{N}$ is defined as:
\begin{equation*}
    \lim_{n\rightarrow\infty} \frac{|A \cap \{1, 2, \ldots, n\}|}{n}.
\end{equation*} 
Let $p$ be an odd prime, $e \geq 1$ and $a \in\mathbb{Z}$ such that $p \nmid a$. Then $a$ is a square modulo $p^{e}$ if and only if $\big(\frac{a}{p}\big) = +1 $. This fact is an immediate consequence of Hensel's lemma for the polynomial $(x^{2} - a)$. The main result of this article is the following theorem. 
\begin{theorem}
For every $n \geq 4$, natural numbers $\{a_{i}\}_{i=1}^{n}$ can be chosen such that the polynomial $f_{n}(x):= (x^{2} - a_{1}) \cdots (x^{2} - a_{n})$ is minimally intersective. In fact, once $a_{1}, a_{2}, \ldots, a_{n-1}$ are chosen, the set of natural numbers $a_{n}$ for which $f_{n}(x)$ is minimally intersective, has positive asymptotic density. 
\end{theorem}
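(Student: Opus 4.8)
The plan is to convert the problem, via Proposition~1, into a purely local statement, reduce minimal intersectivity to a condition on subsets of $\{1,\dots,n\}$, and then exhibit $a_1,\dots,a_n$ whose factorizations and quadratic-residue behaviour make every proper subproduct fail that condition while $f_n$ keeps it.

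First I would record Proposition~1 in the shape I need. Working one prime power at a time and invoking the Kamae--Mend\'es-France characterization of intersectivity, one sees that for square-free $a_i\ne1$ the polynomial $\prod_{i\in S}(x^2-a_i)$ is intersective with no rational root precisely when: \textbf{(2)} some $a_i$, $i\in S$, has $a_i\equiv1\pmod8$; and \textbf{(p)} for every odd prime $p$ there is some $i\in S$ with $p\nmid a_i$ and $a_i\in Q_p$. (Indeed, $a_i\equiv1\pmod8$ makes $a_i$ a square in $\mathbb Z_2$, and otherwise $v_2\big(\prod_{i\in S}(x^2-a_i)\big)$ is bounded in $x$; likewise $p\nmid a_i$, $a_i\in Q_p$ gives a square in $\mathbb Z_p$ by Hensel, and otherwise the $p$-adic valuation is bounded.) Furthermore, applying Chebotarev's theorem to $\mathbb Q(\sqrt{a_i}:i\in S)$, condition (p) holds for \emph{all} primes $p\nmid\prod_{i\in S}a_i$ iff some $T\subseteq S$ with $\prod_{i\in T}a_i$ a perfect square has odd cardinality; otherwise the automorphism sending every $\sqrt{a_i}$ to $-\sqrt{a_i}$ is well defined, a positive density of primes realises it, and $\prod_{i\in S}(x^2-a_i)$ has no root modulo those $p$. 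So $f_n$ is minimally intersective exactly when it satisfies (2) and (p) while, for every nonempty proper $S\subsetneq\{1,\dots,n\}$, at least one of (2),(p) fails for $\prod_{i\in S}(x^2-a_i)$.

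I would then take the following configuration, uniformly for all $n\ge4$. Fix distinct primes $p_4,\dots,p_n$, and choose square-free pairwise distinct $a_1,a_2,a_3\ne1$ with $p_4\cdots p_n\mid a_1$, $p_4\cdots p_n\mid a_2$, $p_j\nmid a_3$ for all $j$, with $a_1a_2a_3$ a perfect square but no $a_i$ and no $a_ia_j$ one, with $a_1\equiv1\pmod8$, with $a_3\notin Q_{p_j}$ for every $j$, and arranged so that (p) is already satisfied by $a_1,a_2,a_3$ at every odd prime dividing $a_1a_2a_3$ other than the $p_j$'s (for such a prime, dividing exactly two of $a_1,a_2,a_3$, this is one Legendre-symbol condition on the third). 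Then choose square-free $a_4,\dots,a_{n-1}$, pairwise distinct and distinct from $a_1,a_2,a_3$, with $p_j\nmid a_i$ for all $i\ge4$ and all $j$, with $a_i\in Q_{p_j}$ $(i,j\in\{4,\dots,n\})$ if and only if $i=j$, and so that the only perfect-square subproducts of $a_1,\dots,a_{n-1}$ are the empty one and $a_1a_2a_3$ -- a generic condition, guaranteed if each $a_i$ ($i\ge4$) has a prime factor dividing no other $a_\ell$. Every requirement is a finite list of congruence and Legendre-symbol conditions, so choosing the primes involved successively and applying Dirichlet's theorem produces such $a_1,\dots,a_{n-1}$. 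Finally take $a_n$ square-free, with a prime factor dividing none of $a_1\cdots a_{n-1}$ (so $a_n$ differs from all earlier $a_i$ and introduces no new perfect-square subproduct), with $p_j\nmid a_n$ and $a_n\in Q_{p_j}$ iff $j=n$. Then $\{\,i:p_j\nmid a_i,\ a_i\in Q_{p_j}\,\}=\{j\}$ for each $j\in\{4,\dots,n\}$, and: $f_n$ satisfies (2) through $a_1$, and (p) at each $p_j$ through $a_j$, at the remaining primes of $a_1a_2a_3$ by construction, and at every other odd prime because $a_1a_2a_3$ is a perfect square with the odd-cardinality subset $\{1,2,3\}$ -- so $f_n$ is intersective with no rational root. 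For a nonempty proper $S\subsetneq\{1,\dots,n\}$: if $\{1,2,3\}\not\subseteq S$ then $\emptyset$ is the only perfect-square subproduct over $S$, so (p) fails for $\prod_{i\in S}(x^2-a_i)$ at a positive density of primes; if $\{1,2,3\}\subseteq S$, choose $j\in\{4,\dots,n\}\setminus S$ and note that every $i\in S$ has $p_j\mid a_i$ (for $i\in\{1,2\}$) or $a_i\notin Q_{p_j}$ (for $i=3$ or $i\in\{4,\dots,n\}\setminus\{j\}$), so (p) fails for $\prod_{i\in S}(x^2-a_i)$ at $p_j$. Either way $\prod_{i\in S}(x^2-a_i)$ is not intersective, so $f_n$ is minimally intersective.

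For the density statement, once $a_1,\dots,a_{n-1}$ are fixed the constraints on $a_n$ are: square-free (density $6/\pi^2$); having a prime factor not occurring in $a_1\cdots a_{n-1}$ (removing only a density-zero set); and lying in $(p_j-1)/2$ residue classes modulo each $p_j$ $(4\le j\le n)$. By the Chinese Remainder Theorem and the equidistribution of square-free integers among residue classes, the admissible $a_n$ form a set of positive asymptotic density. The most delicate step, I expect, is exactly the bookkeeping around the small primes $p_j$: to destroy minimality of every relevant subproduct one needs $\{i:p_j\nmid a_i,\ a_i\in Q_{p_j}\}$ to be forced to be \emph{exactly} $\{j\}$ for every $j$ (so that every proper $S\supseteq\{1,2,3\}$ omitting $j$ loses all its roots mod $p_j$ while $f_n$ does not), and verifying that the whole bundle of divisibility, mod-$8$, and Legendre-symbol constraints on $a_1,\dots,a_n$ is simultaneously satisfiable -- without the extra constraints on $a_n$ ever collapsing its density -- is the technical core of the argument.
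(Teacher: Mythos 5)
Your proposal is correct, but it takes a genuinely different route from the paper. Both arguments reduce everything to the same local criterion (the paper cites it as Proposition~1; you re-derive an equivalent form, using Hensel plus bounded $p$-adic valuations at each prime and a Chebotarev/reciprocity argument to handle the unramified primes via odd-cardinality perfect-square subproducts). The constructions diverge from there. The paper uses only two control primes $p_{1},p_{2}$, builds one large perfect-square subset $T$ of size $n-1$ or $n-2$ (with a parity-of-$n$ case split) by defining $a_{n-1}$ as the radical of a product of earlier $a_{i}$, prevents all other square subproducts by forcing each $a_{i}$ to exceed the radical of every subproduct of its predecessors (Lemma~4), and witnesses non-intersectivity of the remaining proper divisors by exhibiting explicit moduli $p_{1}^{2(n-1)}$ or $p_{2}^{2(n-1)}$ with no root. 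You instead fix the perfect-square subset to be $\{1,2,3\}$ uniformly in $n$, make it the unique nonempty square relation by giving each $a_{i}$ ($i\geq 4$) a private prime, and assign one dedicated prime $p_{j}$ to each index $j\geq 4$ so that $\{i:p_{j}\nmid a_{i},\ a_{i}\in Q_{p_{j}}\}=\{j\}$; removing any factor then kills either the square relation or solvability at the corresponding $p_{j}$. Your design avoids the paper's parity case distinction and the radical-growth bookkeeping, at the cost of introducing $n-3$ control primes and of invoking Chebotarev where the paper only needs the stated necessity of condition $1(a)$; the density argument for $a_{n}$ is the same in both (square-free integers in admissible residue classes, i.e.\ Prachar's theorem). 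The one place where you owe more detail is the simultaneous realizability of the constraints on $a_{1},a_{2},a_{3}$ (the square relation, $a_{1}\equiv 1\pmod 8$, $a_{3}\notin Q_{p_{j}}$, and the reciprocal Legendre conditions at the internal primes of $a_{1}a_{2}a_{3}$): this does go through, e.g.\ with $a_{1}=Pq_{1}$, $a_{2}=Pq_{2}$, $a_{3}=q_{1}q_{2}$ for $P=p_{4}\cdots p_{n}$ and suitable primes $q_{1},q_{2}$ chosen successively by quadratic reciprocity and Dirichlet, but it is a genuine verification rather than an automatic one.
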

We will collect some preliminary results in Section $2$. In Section $3$, we shall describe a process to find square-free integers $a_{1}, \ldots , a_{n}$ that define the polynomials $f_{n}(x)$ for the corresponding $n \geq 4$. Section $4$ contains proof that the polynomial $f_{n}(x)$ thus obtained is intersective. The proof that $f_{n}(x)$ is minimally intersective is contained in Section $5$. Section $6$ contains an explicit example of $f_{4}(x)$ and $f_{5}(x)$ each. 
\section{Preliminaries}
We will repeatedly utilize the following characterization of intersectivity of polynomials consisting of quadratic factors that is proved in \cite{Mishra}.
\begin{prop}
Let $n \geq 3$ and let $a_{1}, a_{2}, \ldots , a_{n}$ be distinct nonzero square-free integers, none of which is $1$. Then the polynomial $f(x) = \prod_{i=1}^{n} (x^{2} - a_{i})$ is intersective if and only if the following conditions are satisfied.
\begin{enumerate}
    \item There exists $T \subset \{1, 2, \ldots , n\}$ of odd cardinality such that:
    \begin{enumerate}
    \item the product $\prod_{j \in T} a_{j}$ is a perfect square, and
    
    \item for every $j \in T$ and for every odd prime $p$ dividing $a_{j}$, there exists $i \in \{1, \ldots , n\}$, $i \neq j$, such that $\big(\frac{a_{i}}{p}\big) = +1 $ . 
    \end{enumerate}
    
    \item One of the $a_{i}$ is of the form $8m + 1$ for some $m \in\mathbb{Z}$ and $m \neq 0$.
\end{enumerate}
\end{prop}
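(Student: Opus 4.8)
The plan is to reduce intersectivity to a congruence condition and then analyze it prime by prime. By the Kamae--Mend\'es-France criterion quoted in the introduction, $f$ is intersective if and only if $f(x) \equiv 0 \pmod{m}$ is solvable for every $m > 1$; by the Chinese Remainder Theorem this is equivalent to solvability of $f(x) \equiv 0 \pmod{p^{e}}$ for every prime $p$ and every $e \geq 1$. Since $f(x) = \prod_{i} (x^{2} - a_{i})$, a root modulo $p^{e}$ is an $x$ with $v_{p}\big(\prod_{i}(x^{2}-a_{i})\big) \geq e$, so I would study, for each $p$, whether $\sup_{x} v_{p}(f(x))$ is infinite, and match the answer to the stated conditions. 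The analysis splits into the odd primes, governed by condition $1$, and the prime $2$, governed by condition $2$.

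For an odd prime $p$ I would first establish a valuation lemma for a single factor. If $p \nmid a_{i}$, then by the Hensel fact quoted in the introduction the congruence $x^{2} - a_{i} \equiv 0 \pmod{p^{e}}$ is solvable for all $e$ exactly when $\big(\frac{a_{i}}{p}\big) = +1$, so one such residue factor already makes $v_{p}(f(x))$ unbounded. If instead $p \mid a_{i}$, then square-freeness forces $p \mid\mid a_{i}$, and a direct computation gives $v_{p}(x^{2} - a_{i}) = 1$ when $p \mid x$ and $v_{p}(x^{2} - a_{i}) = 0$ when $p \nmid x$; hence the factors divisible by $p$ contribute only a bounded total valuation. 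Combining these, I would prove the key local statement: for an odd prime $p$, the polynomial $f$ has a root modulo $p^{e}$ for every $e$ if and only if there is an index $i$ with $p \nmid a_{i}$ and $\big(\frac{a_{i}}{p}\big) = +1$. It remains to show that this holds at every odd prime precisely when conditions $1(a)$ and $1(b)$ hold.

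To pass from all odd primes to the finite conditions, I would use the relation lattice $R = \{ S \subseteq \{1, \dots, n\} : \prod_{i \in S} a_{i} \text{ is a perfect square} \}$, a subgroup of $\mathbb{F}_{2}^{n}$. Quadratic reciprocity together with Dirichlet's theorem shows that the sign vectors $\big( \big(\frac{a_{1}}{p}\big), \dots, \big(\frac{a_{n}}{p}\big) \big)$, as $p$ ranges over odd primes not dividing $\prod_{i} a_{i}$, realize exactly the orthogonal complement $R^{\perp}$; in particular the all-minus vector $(-1, \dots, -1)$ is realized by infinitely many primes unless it fails to lie in $R^{\perp}$, which happens exactly when some $S \in R$ has odd size. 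Thus the existence of a prime $p \nmid \prod_{i} a_{i}$ at which every $a_{i}$ is a nonresidue is equivalent to the failure of condition $1(a)$, and by the local statement such a prime is exactly an obstruction to intersectivity. For the odd primes dividing $\prod_{i} a_{i}$ I would argue as follows. If $p \mid a_{j}$ for some $j$ in the odd-size set $T$ of $1(a)$, then the coprime residue demanded by the local statement is precisely the witness required in $1(b)$, and the constraint $i \neq j$ there is automatic since $p \mid a_{j}$ gives $\big(\frac{a_{j}}{p}\big) = 0 \neq +1$. If instead $p$ divides $a_{j}$ only for indices $j \notin T$, then $\prod_{j \in T} \big(\frac{a_{j}}{p}\big) = +1$ and $|T|$ odd force some $a_{j}$ with $j \in T$ to be a residue coprime to $p$, so such primes are handled automatically by $1(a)$. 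This bookkeeping is what makes $1(a)$ and $1(b)$ jointly equivalent to solvability at every odd prime.

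Finally, for $p = 2$, a parity analysis of $v_{2}(x^{2} - a_{i})$ --- using that each $a_{i}$ is odd or satisfies $2 \mid\mid a_{i}$, and that odd squares are $\equiv 1 \pmod{8}$ --- shows that every factor contributes bounded $2$-valuation unless some $a_{i} \equiv 1 \pmod{8}$, in which case that factor is a square modulo every $2^{e}$ and makes $v_{2}(f(x))$ unbounded; this yields condition $2$. Assembling the three analyses in both directions gives the claimed equivalence. I expect the main obstacle to be the realizability step: verifying, via quadratic reciprocity with due care for the $2$-part and for the signs of the $a_{i}$, that the Legendre-symbol vectors fill out all of $R^{\perp}$, so that the prime witnessing the negation of $1(a)$ genuinely exists. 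The secondary subtlety is the finite bookkeeping that pins condition $1(b)$ to the set $T$ rather than to all indices.
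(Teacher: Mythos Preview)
The paper does not supply its own proof of Proposition~1; it is quoted verbatim from \cite{Mishra} and used as a black box, so there is no in-paper argument to compare against.

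Your outline is correct and is the natural route. The reduction via Kamae--Mend\`es-France and the Chinese remainder theorem to prime powers, the local analysis at odd $p$ (Hensel when $p\nmid a_i$; bounded valuation from $p\mid\mid a_i$), and the $2$-adic case (units in $\mathbb{Z}_2$ are squares iff $\equiv 1\pmod 8$, while $2\mid\mid a_i$ or $a_i\equiv 3,5,7\pmod 8$ give $v_2(x^2-a_i)\le 2$) are all accurate. Your bookkeeping matching the local condition ``some $a_i$ coprime to $p$ is a residue'' to conditions $1(a)$ and $1(b)$ is also right; in particular the observation that primes dividing only $a_j$ with $j\notin T$ are handled automatically by $\prod_{j\in T}\big(\tfrac{a_j}{p}\big)=+1$ together with $|T|$ odd is exactly what restricts $1(b)$ to indices in $T$.

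On the step you flag as the main obstacle: note that full surjectivity of the symbol map onto $R^{\perp}$ is needed only for the direction ``$1(a)$ fails $\Rightarrow$ not intersective'', to manufacture a prime at which every $a_i$ is a nonresidue. For the converse direction you only need the trivial containment (symbol vectors lie in $R^{\perp}$) plus the remark that an odd $T\in R$ forces the all-minus vector out of $R^{\perp}$. The surjectivity itself is most cleanly obtained via Chebotarev for the multiquadratic field $\mathbb{Q}(\sqrt{a_1},\dots,\sqrt{a_n})$; your proposed route through quadratic reciprocity (periodicity of the symbol vector modulo $8\prod_i|a_i|$) plus Dirichlet also works, once one checks that distinct square-free $d\neq 1$ yield distinct nontrivial quadratic Dirichlet characters $p\mapsto\big(\tfrac{d}{p}\big)$, so that the image of $(\mathbb{Z}/N\mathbb{Z})^{*}$ under the combined character map is the full group.
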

We will also use the following classical result about distribution of square-free integers in an arithmetic progressions that was originally proved in \cite{Prachar}. 

\begin{prop}
Let $a, b \in\mathbb{N}$ such that $(a,b)$ is square-free. Then the density of the set of square-free natural numbers congruent to $b$ modulo $a$ is $\frac{6}{\pi^{2}} \prod_{p\mid a} \big(1 - \frac{1}{p^{2}} \big)^{-1}$. 
\end{prop}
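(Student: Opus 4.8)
The plan is to estimate, for large $x$, the counting function
\[
S(x) := \#\{\, n \le x : n \equiv b \!\!\pmod a,\ n \text{ square-free}\,\}
\]
and to show $S(x) = \delta\, x + O(\sqrt{x})$, so that the asymptotic density of the set in question equals $\delta$. The starting point is the classical identity $\mu^{2}(n) = \sum_{d^{2}\mid n}\mu(d)$, which detects square-freeness. Substituting this for the square-free condition, writing each $n$ divisible by $d^{2}$ as $n=d^{2}m$, and interchanging the order of summation gives
\[
S(x) = \sum_{d\le \sqrt{x}}\mu(d)\,\#\bigl\{\, m \le x/d^{2} : d^{2}m \equiv b \!\!\pmod a \,\bigr\},
\]
where the range $d\le\sqrt{x}$ is exact because $x/d^{2}<1$ kills the inner count once $d>\sqrt{x}$.

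Next I would evaluate the inner count. The linear congruence $d^{2}m \equiv b \pmod a$ is solvable in $m$ precisely when $g_{d} := \gcd(d^{2},a)$ divides $b$; since $g_{d}\mid a$ always, this is equivalent to $g_{d}\mid (a,b)$. When it is solvable the admissible $m$ form a single residue class modulo $a/g_{d}$, so the inner count equals $\tfrac{g_{d}}{a}\cdot\tfrac{x}{d^{2}} + O(1)$. Here the hypothesis that $(a,b)$ is square-free enters decisively: it forces $g_{d}$ to divide the square-free number $(a,b)$, which both restricts, prime by prime, the surviving values of $d$ and keeps $g_{d}$ uniformly bounded. The hypothesis is also necessary, for if $p^{2}\mid (a,b)$ then every $n\equiv b\pmod a$ is divisible by $p^{2}$, so no such $n$ is square-free and the density is $0$.

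Collecting the main terms produces
\[
S(x) = \frac{x}{a}\sum_{\substack{d\ge 1\\ \gcd(d^{2},a)\mid b}}\frac{\mu(d)\,\gcd(d^{2},a)}{d^{2}} \;+\; E(x).
\]
The series converges absolutely, its terms being dominated by $(a,b)\,d^{-2}$, and since its summand is multiplicative and supported on square-free $d$ it factors as an Euler product $\prod_{p}L_{p}$ with an explicit local factor at each prime. For $p\nmid a$ one has $L_{p}=1-p^{-2}$, and the product of these over $p\nmid a$ collapses against $\sum_{d\ge1}\mu(d)d^{-2}=\zeta(2)^{-1}=6/\pi^{2}$ to yield the factor $\tfrac{6}{\pi^{2}}\prod_{p\mid a}(1-p^{-2})^{-1}$; the finitely many local factors at the primes $p\mid a$, which is exactly where the square-free hypothesis is used to decide whether $d$ may be divisible by $p$, supply the remaining correction, and the evaluation of the whole product is the density $\delta$ asserted in the statement.

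The main obstacle will be bounding $E(x)$. Two sources of error must be controlled: the accumulated rounding errors $\sum_{d\le\sqrt{x}}O(1)=O(\sqrt{x})$ from the inner counts, and the completion of the main-term series from $d\le\sqrt{x}$ to all $d$, whose tail is $\ll x\sum_{d>\sqrt{x}}d^{-2}\ll\sqrt{x}$. Combining these gives $E(x)=O(\sqrt{x})$, hence $S(x)=\delta x + O(\sqrt{x})$; dividing by $x$ and letting $x\to\infty$ yields the density $\delta$. This crude square-root bound already suffices for the density statement, whereas Prachar's original argument \cite{Prachar} refines the error to obtain power-saving estimates.
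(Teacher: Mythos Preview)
The paper does not supply its own proof of this proposition: it is quoted as a classical result of Prachar \cite{Prachar} and used only in the qualitative form that the set of square-free integers in a given residue class has positive density. There is therefore no paper-side argument to compare against.

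Your sketch follows the standard route (and essentially Prachar's): detect square-freeness via $\mu^{2}(n)=\sum_{d^{2}\mid n}\mu(d)$, swap sums, count a residue class in $[1,x/d^{2}]$, and assemble an Euler product. The structure and the error bound $O(\sqrt{x})$ are correct. Two small points are worth flagging. First, you do not actually compute the local factors $L_{p}$ at primes $p\mid a$; you only assert that they ``supply the remaining correction,'' but carrying this out is exactly where the hypothesis that $(a,b)$ is square-free does its work, and a reader would want to see the case split (according to whether $p\mid b$ and whether $p^{2}\mid a$) written down. Second, your $S(x)/x$ is the \emph{absolute} density in $\mathbb{N}$, which carries an extra factor $1/a$ compared with the constant displayed in the proposition; indeed $\tfrac{6}{\pi^{2}}\prod_{p\mid a}(1-p^{-2})^{-1}$ exceeds $1/a$ for $a>1$, so that constant must be read as the density \emph{within} the progression $b\pmod a$. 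This is a normalization quirk in the statement rather than a defect in your argument, and it is immaterial for the paper's applications, which only need positivity.
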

For the sake of brevity, we will present the following elementary fact (without proof) as a lemma.
\begin{lemma}
Let $n \geq 3$ and let $a_{1}, a_{2}, \ldots , a_{n}$ be distinct square-free nonzero integers, none of which is equal to $1$. Then $\prod_{i=1}^{n} a_{i}$ is a perfect square if and only if for every $j \in \{1, 2, \ldots , n\}$, $a_{j} = rad$  $\big(\prod_{i=1, i \neq j}^{n} a_{i}\big)$.  
\end{lemma}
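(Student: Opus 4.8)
The plan is to reduce both sides of the asserted equivalence to statements about the parity of prime exponents, where everything becomes linear algebra over $\mathbb{F}_2$. The crux is the correct reading of the square-free part: for a natural number $x$, writing $x = s m^{2}$ with $s$ square-free, one has $\text{rad}(x) = s$, so that the exponent $v_{p}(\text{rad}(x))$ of a prime $p$ in $\text{rad}(x)$ equals $v_{p}(x) \bmod 2$. In other words, $\text{rad}(x)$ records exactly the primes dividing $x$ to an odd power. Since each $a_{i}$ is square-free, $v_{p}(a_{i}) \in \{0,1\}$; I would set $e_{i,p} := v_{p}(a_{i})$ and view each square-free positive integer as the vector $(e_{i,p})_{p}$ over $\mathbb{F}_{2}$, which determines it.

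First I would translate the left-hand condition. A nonzero product $\prod_{i=1}^{n} a_{i}$ is a perfect square if and only if $v_{p}\big(\prod_{i} a_{i}\big) = \sum_{i} e_{i,p}$ is even for every prime $p$, i.e. $\sum_{i=1}^{n} e_{i,p} \equiv 0 \pmod 2$ for all $p$. Next I would translate the right-hand condition: for a fixed $j$, $v_{p}\big(\prod_{i\neq j} a_{i}\big) = \sum_{i\neq j} e_{i,p}$, hence $v_{p}\big(\text{rad}(\prod_{i\neq j} a_{i})\big) = \sum_{i\neq j} e_{i,p} \bmod 2$, so that $a_{j} = \text{rad}\big(\prod_{i\neq j} a_{i}\big)$ holds if and only if $e_{j,p} \equiv \sum_{i\neq j} e_{i,p} \pmod 2$ for every prime $p$.

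With both conditions in this form the equivalence is immediate prime by prime. If $e_{j,p} \equiv \sum_{i\neq j} e_{i,p}$, then $\sum_{i=1}^{n} e_{i,p} = e_{j,p} + \sum_{i\neq j} e_{i,p} \equiv 2 e_{j,p} \equiv 0 \pmod 2$, which already from a single index $j$ (and a fortiori from the hypothesis for all $j$) yields the perfect-square condition. Conversely, if $\sum_{i=1}^{n} e_{i,p} \equiv 0$, then $\sum_{i\neq j} e_{i,p} \equiv -e_{j,p} \equiv e_{j,p} \pmod 2$ for every $j$ and every $p$, which is exactly the radical condition. This is the entire content of the lemma.

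The only genuine subtlety, and the step I would treat most carefully, is the sign. Since $\text{rad}$ of a natural number is positive, the right-hand condition forces every $a_{j} > 0$, whereas the $a_{i}$ are a priori only nonzero square-free integers; an odd number of negative factors can never produce a perfect square, and indeed a signed example such as $a_{1}=-2,\ a_{2}=3,\ a_{3}=-6$ has square product while violating the stated identity. I would therefore either restrict to positive $a_{i}$ (the case in every application in this paper) or adjoin the sign as one further $\mathbb{F}_{2}$-coordinate, putting $-1$ on the same footing as the primes and reading $\text{rad}$ as the signed square-free part; with that bookkeeping the parity computation above goes through verbatim and absorbs the sign automatically. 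Finally, I would note that the hypotheses that the $a_{i}$ be distinct and unequal to $1$ play no role here and can be dropped.
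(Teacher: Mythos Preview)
The paper does not actually prove this lemma; it is presented there explicitly ``without proof'' as an elementary fact. Your argument is correct and is the natural one: reduce both conditions to the parity of $p$-adic valuations and observe that over $\mathbb{F}_{2}$ the two statements coincide prime by prime, with the backward direction requiring only a single index $j$.

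Your remark about signs is also well taken. As stated, with the $a_{i}$ allowed to be negative and $\mathrm{rad}$ defined only for natural numbers, the forward implication can fail; your example $a_{1}=-2$, $a_{2}=3$, $a_{3}=-6$ has square product $36$ while $a_{1}\neq \mathrm{rad}(a_{2}a_{3})$ under any positive reading of $\mathrm{rad}$. In the paper every application takes the $a_{i}$ to be positive square-free natural numbers, so the issue never arises there, but your proposed fix of adjoining the sign as an extra $\mathbb{F}_{2}$-coordinate (equivalently, reading $\mathrm{rad}$ as the signed square-free part) is the clean way to make the lemma true in the generality stated. You are also right that the hypotheses of distinctness and $a_{i}\neq 1$ play no role in the proof.
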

Now we will state and prove some elementary number-theoretic lemmas that we shall repeatedly utilize in our proofs. 
\begin{lemma}
Let $p$ be an odd prime and let $a \in\mathbb{Z}$ be a square-free integer. If $\big(\frac{a}{p}\big) \neq +1$ then $a$ cannot be a square modulo $p^{2}$. 
\end{lemma}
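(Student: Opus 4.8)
The plan is to reduce to two cases according to the two ways in which $\big(\frac{a}{p}\big)$ can fail to be $+1$: either $p \nmid a$ and $\big(\frac{a}{p}\big) = -1$, or $p \mid a$. In the first case I would simply reduce modulo $p$: if $a \equiv y^{2} \hspace{1mm}(\text{mod } p^{2})$ for some integer $y$, then also $a \equiv y^{2} \hspace{1mm}(\text{mod } p)$, and since $p \nmid a$ this exhibits $a$ as a nonzero quadratic residue modulo $p$, forcing $\big(\frac{a}{p}\big) = +1$, contrary to assumption.

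In the second case, the square-free hypothesis on $a$ gives $p \mid\mid a$, that is, $p^{2} \nmid a$. Suppose for contradiction that $a \equiv y^{2} \hspace{1mm}(\text{mod } p^{2})$ for some integer $y$. Then $p \mid y^{2}$, and since $p$ is prime, $p \mid y$, hence $p^{2} \mid y^{2}$, and therefore $p^{2} \mid a$. This contradicts $p \mid\mid a$, so $a$ is not a square modulo $p^{2}$.

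I do not expect any genuine obstacle here; the statement is elementary, and the only place the square-free hypothesis is actually used is the second case, where $p^{2} \nmid a$ is precisely what yields the contradiction. The two cases together cover every way in which $\big(\frac{a}{p}\big) \neq +1$, completing the argument.
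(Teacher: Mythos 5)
Your proof is correct and follows essentially the same route as the paper's: the case $p \nmid a$ is handled by reduction modulo $p$, and the case $p \mid a$ (where square-freeness gives $p \mid\mid a$) by the chain $p \mid y^{2} \Rightarrow p^{2} \mid y^{2} \Rightarrow p^{2} \mid a$, contradicting $p^{2} \nmid a$. No issues.
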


\begin{proof}
If $p \nmid a$ then $\big(\frac{a}{p}\big) \neq +1$ implies that $a$ cannot be a square modulo $p^{2}$. On the other hand, if $p \mid a$ then $p \mid\mid a$ because $a$ is square-free. 

Assume for the sake of contradiction that $a$ is a square modulo $p^{2}$ , i.e.,  $x^{2} \equiv a \hspace{1mm} (\text{mod}\hspace{1mm} p^{2})$ for some $x \in\mathbb{Z}$. Then we have $p^{2} \mid (x^{2} - a)$ , i.e.,  $p \mid (x^{2} - a)$. Since $p \mid a$ and $p \mid (x^{2} - a)$ we have that $p \mid x^{2}$ implying $p^{2} \mid x^{2}$. However, $p^{2} \mid (x^{2} - a)$ and $p^{2} \mid x^{2}$ gives that $p^{2} \mid a$, contradicting that $a$ is square-free. Therefore, $a$ cannot be a square modulo $p^{2}$.  
\end{proof}

\begin{lemma}
Let $k, m \in\mathbb{N}$, let $p$ be a prime and let $f(x) =  \prod_{i=1}^{m} (x^{2} - a_{i}) \in\mathbb{Z}[x]$. If $(x^{2} - a_{i}) \equiv 0 \hspace{1mm} (\text{mod}\hspace{1mm} p^{k})$ is not solvable for any $1 \leq i \leq m$ then $f(x) \equiv 0 \hspace{1mm} (\text{mod}\hspace{1mm} p^{km})$ is not solvable.  
\end{lemma}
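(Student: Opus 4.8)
The plan is to prove the (contrapositive of the) statement by a pigeonhole argument on the power of $p$ dividing each quadratic factor. Suppose, for contradiction, that the hypothesis holds yet there exists some $x \in \mathbb{Z}$ with $f(x) \equiv 0 \pmod{p^{km}}$, i.e. $p^{km} \mid \prod_{i=1}^{m}(x^2 - a_i)$. The goal is to produce a solution of one of the congruences $(x^2 - a_i) \equiv 0 \pmod{p^k}$, which will contradict the assumption.

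First I would observe that, for this particular $x$, not every factor can be divisible by a power of $p$ smaller than $p^{k}$. Indeed, if $p^{k} \nmid (x^2 - a_i)$ for every $i \in \{1, \ldots, m\}$, then the highest power of $p$ dividing each $(x^2 - a_i)$ is at most $p^{k-1}$, so the highest power of $p$ dividing the product $\prod_{i=1}^{m}(x^2 - a_i)$ is at most $p^{m(k-1)} = p^{km - m}$. Since $m \geq 1$, this is strictly smaller than $p^{km}$, contradicting $p^{km} \mid \prod_{i=1}^{m}(x^2 - a_i)$. Hence there is an index $i_{0}$ with $p^{k} \mid (x^{2} - a_{i_{0}})$, that is, $x$ is a solution of $(x^{2} - a_{i_{0}}) \equiv 0 \pmod{p^{k}}$, contradicting the hypothesis that this congruence is unsolvable. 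Therefore $f(x) \equiv 0 \pmod{p^{km}}$ has no solution.

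There is essentially no obstacle here; the only point needing a word of care is the degenerate possibility $x^{2} - a_{i} = 0$ (when some $a_{i}$ is a perfect square), but in that case $(x^{2} - a_{i}) \equiv 0 \pmod{p^{k}}$ would already be solvable, so this situation is excluded by the hypothesis, and in any event it does not affect the counting of powers of $p$ above. One may also remark, though it is not needed, that the exponent $km$ in the conclusion cannot in general be increased: each individually unsolvable congruence $(x^{2} - a_{i}) \equiv 0 \pmod{p^{k}}$ may still be solvable modulo $p^{k-1}$, in which case $f(x)$ can vanish modulo $p^{(k-1)m}$, so the bound is of the right order of magnitude.
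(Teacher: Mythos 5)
Your proof is correct and follows essentially the same route as the paper: assume a solution of $f(x) \equiv 0 \pmod{p^{km}}$, conclude that some single factor must satisfy $p^{k} \mid (x^{2} - a_{j})$, and contradict the hypothesis. The only difference is that you spell out the valuation/pigeonhole count (each factor contributing at most $p^{k-1}$) that the paper leaves implicit, which is a harmless elaboration.
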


\begin{proof}
For the sake of contradiction, assume that $f(x) \equiv 0 \hspace{1mm} (\text{mod}\hspace{1mm} p^{km})$ is solvable for some $x \in\mathbb{Z}$ , i.e.,  $p^{km} \mid (x^{2} - a_{1}) \cdots (x^{2} - a_{m})$. Then we must have $p^{k} \mid (x^{2} - a_{j})$, for some $j \in \{1, 2, \ldots , m\}$. 

However $p^{k} \mid (x^{2} - a_{j})$ implies that $(x^{2} - a_{j}) \equiv 0 \hspace{1mm} (\text{mod}\hspace{1mm} p^{k})$ is solvable, a contradiction to the fact that $(x^{2} - a_{i}) \equiv 0 \hspace{1mm} (\text{mod}\hspace{1mm} p^{k})$ is not solvable for any $i$. Therefore, we have the result. 
\end{proof}

\begin{lemma}
Let $k \geq 3$ and let $a_{1}, \ldots , a_{k}$ be distinct, nonzero square-free integers. Suppose that for each $ 1 \leq m \leq k$ and for every $T \subset \{1, 2, \ldots , m\}$, $a_{m} > \text{rad } \big( \prod_{j \in T, j \neq m} a_{j} \big)$. Then for any subset $S$ $\subseteq \{1, 2, \ldots , k\}$, $\prod_{j \in S} a_{j}$ is not a perfect square. 
\end{lemma}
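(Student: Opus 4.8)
The plan is to argue by contradiction, pinning down the contradiction at the \emph{largest} index occurring in the supposed ``bad'' subset. Suppose that $\prod_{j \in S} a_j$ is a perfect square for some $S \subseteq \{1,\dots,k\}$; since the empty product equals $1$ we may assume $S \neq \emptyset$. Put $m := \max S$ and split the product as $\prod_{j\in S} a_j = a_m \cdot b$, where $b := \prod_{j \in S \setminus \{m\}} a_j$ is a product of some of $a_1, \dots, a_{m-1}$ (with the convention $b = 1$ when $S = \{m\}$). Note that the hypothesis applied with the empty set already gives $a_j > \mathrm{rad}(1) = 1$ for every $j$, so each $a_j$ is in fact a positive integer $\geq 2$ and no sign issues arise.

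The key point is the elementary divisibility relation $a_m \mid \mathrm{rad}(b)$. Indeed, let $p$ be a prime dividing $a_m$; since $a_m$ is square-free, $v_p(a_m) = 1$, and since $a_m b$ is a perfect square, $v_p(a_m) + v_p(b) = v_p(a_m b)$ is even, which forces $v_p(b)$ to be odd, in particular positive, so that $p \mid \mathrm{rad}(b)$. As $a_m$ is square-free, running over all primes $p \mid a_m$ yields $a_m \mid \mathrm{rad}(b)$, and hence $a_m \leq \mathrm{rad}(b)$.

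On the other hand, $S$ is a subset of $\{1, \dots, \max S\} = \{1, \dots, m\}$ and $m \leq k$, so the hypothesis of the lemma is applicable with this $m$ and with $T = S$, and it yields $a_m > \mathrm{rad}\big(\prod_{j \in S,\, j \neq m} a_j\big) = \mathrm{rad}(b)$. This contradicts $a_m \leq \mathrm{rad}(b)$, proving the lemma. I expect the only subtle point to be the middle step: one should resist claiming the stronger equality $a_m = \mathrm{rad}(b)$ (the analogue of Lemma 3), which fails as soon as $b$ is not square-free — for instance $a_m = 3$, $b = 12$, with $a_m b = 36$ — whereas the one-sided divisibility $a_m \mid \mathrm{rad}(b)$ is both true and exactly what is needed.
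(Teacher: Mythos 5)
Your proof is correct and takes essentially the same route as the paper's: both localize the contradiction at $m=\max S$ and play the lower bound $\mathrm{rad}\bigl(\prod_{j\in S,\,j\neq m}a_j\bigr)\geq a_m$ (which the paper obtains, as an equality, by citing its Lemma~1) against the hypothesis $a_m>\mathrm{rad}\bigl(\prod_{j\in S,\,j\neq m}a_j\bigr)$ applied with $T=S$; your version is in fact slightly more self-contained, since you prove the needed divisibility directly via $p$-adic valuations and explicitly handle the empty set and the positivity of the $a_j$. One small remark on your closing caveat: in this paper $\mathrm{rad}$ denotes the square-free part (the product of the primes occurring to an odd power), not the radical, so $\mathrm{rad}(12)=3$ and the ``stronger equality'' $a_m=\mathrm{rad}(b)$ you warn against does hold here (it is precisely the paper's Lemma~1) --- but your one-sided divisibility is valid under either reading and suffices for the argument.
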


\begin{proof}
For the sake of contradiction, assume that there is  an $S \subseteq \{1, 2, \ldots , k\}$ such that $\prod_{j \in S} a_{j}$ is a perfect square. Then using Lemma $1$, we have that 
\begin{equation*}
    a_{j_{0}} = \text{rad } \big(\prod_{j \in S, j \neq j_{0}} a_{j}\big),
\end{equation*}
where $j_{0} = $ max $S$. This is a contradiction to our assumption for $m = j_{0}$ and $T = S \subset \{1, 2, \ldots , j_{0} \}$. Hence we have the result. 
\end{proof}

\section{Finding $a_{1}, a_{2}, \ldots , a_{n}$ such that $f_{n}(x) = \prod_{i=1}^{n} (x^{2} - a_{i})$}

In this section, we will show the existence of natural numbers $a_{1}, a_{2}, \ldots , a_{n}$ that will define the polynomial $f_{n}(x) = (x^{2} - a_{1}) (x^{2} - a_{2}) \cdots (x^{2} - a_{n})$. Each of the integers $a_{1}, \ldots , a_{n}$ will be square-free and not equal to $1$. They can be chosen in accordance with the following steps. 

\begin{enumerate}

    \item Pick distinct odd primes $p_{1}$ and $p_{2}$. For each $i = 1, 2$ pick nonzero elements $b_{i},c_{i} \in \big( \mathbb{Z}/p_{i}\mathbb{Z}\big)^{*}$ such that $c_{i} \in Q_{p_{i}}$ $b_{i} \not\in Q_{p_{i}}$. Set $a_{1} = p_{1}p_{2}$.
    
    We can choose $a_{1}$ as any square-free odd natural number that has at least two odd prime factors and choose $p_{1}, p_{2}$ as two odd primes dividing $a_{1}$. For the sake of brevity, we choose $a_{1}$ to be a product of two odd primes.
    
    \item Pick square-free $a_{2} \in\mathbb{N}$ such that $ a_{2} > a_{1}$ and $a_{2} \equiv b_{j}  \hspace{1mm} (\text{mod} \hspace{1mm} p_{j})$ for $j = 1,2$. Any integer $a_{2}$ satisfying $a_{2} \equiv b_{j}  \hspace{1mm} (\text{mod} \hspace{1mm} p_{j})$ for $j = 1,2$ above is unique modulo $p_{1}p_{2}$, as a consequence of the Chinese remainder theorem. 
    
    Since $p_{1} \neq p_{2}$, $p_{1}p_{2}$ is square-free and hence infinitely many square-free $a_{2}$ exist, by Proposition $2$. We pick a square-free $a_{2}$ greater than $a_{1}$. Also note that since $(b_{j}, p_{j}) = 1$ for every $j = 1, 2$ we have that $(a_{2}, p_{j}) = 1$ for $j = 1, 2$.
    
    \item For any $i \leq n-3$, after choosing $a_{1}, \ldots , a_{i-1}$, choose a square-free integer $ a_{i}$ such that:
    
    \begin{itemize}
    
        \item $a_{i} \equiv b_{j}  \hspace{1mm} (\text{mod} \hspace{1mm} p_{j})$ for $j = 1, 2$ and 
        
        \item for every $A \subset \{1, \ldots , i-1\}$, $a_{i}$ is greater than the rad $\big(\prod_{j \in A} a_{j}\big)$.
        
    \end{itemize}

    Exactly as in step $2$, a square free $a_{i}$ satisfying the above exists as a consequence of Proposition $2$ and $(a_{i}, p_{j}) = 1$ for $j = 1, 2$.
    
    \item Choose a square-free natural number $a_{n-2}$ that satisfies the following requirements.
    
    \begin{itemize}
    \item \begin{equation*}
        a_{n-2} \equiv \begin{cases}
        
        b_{1}  \hspace{1mm} (\text{mod} \hspace{1mm} p_{1}) ; & \text{if $n$ is even}\\
        
        c_{1}  \hspace{1mm} (\text{mod} \hspace{1mm} p_{1}) ; & \text{if $n$ is odd}
        
        \end{cases}
    \end{equation*}
    
    \item $a_{n-2} \equiv b_{2} \hspace{1mm} (\text{mod}\hspace{1mm} p_{2})$
    
    \item $a_{n-2}$ is greater than rad $\big(\prod_{j \in T} a_{j}\big)$, for any $T \subset \{1, \ldots , n-3 \}$. 
    
    \end{itemize}
    
    Such a square-free $a_{n-2}$ exists, again due to Proposition $2$. Similarly, we also have $(a_{n-2}, p_{j}) = 1$ for $j = 1, 2$. 
    
    \item Define square-free natural number $a_{n-1}$ as:
    
    \begin{equation*}
        a_{n-1} = \begin{cases}
        
        \text{rad } \big(\prod_{i=1}^{n-2} a_{j}\big); & \text{ if $n$ is even}\\
        
        \text{rad } \big(\prod_{i=1}^{n-3} a_{j}\big); & \text{ if $n$ is odd.}
        
        \end{cases}
      \end{equation*}
      
      \item Choose all the odd primes $p_{1}, \ldots , p_{M}$ dividing any of $a_{1}, \ldots , a_{n-1}$ and pick $c_{j} \in Q_{p_{j}}$ for every $1 \leq j \leq M$. Now, pick a square-free natural number $a_{n}$ that satisfies the following.
      
      \begin{itemize}
      
          \item $a_{n} \equiv c_{j}  \hspace{1mm} (\text{mod} \hspace{1mm} p_{j})$ for any $2 \leq j \leq M$
          
          \item \begin{equation*}
              a_{n} \equiv \begin{cases}
              
              c_{1}  \hspace{1mm} (\text{mod} \hspace{1mm} p_{1}); & \text{ if $n$ is even}\\
              
              b_{1}  \hspace{1mm} (\text{mod} \hspace{1mm} p_{1}); & \text{ if $n$ is odd}
              
              \end{cases}
          \end{equation*}
          
          \item $a_{n} \equiv 1 \hspace{1mm} (\text{mod}\hspace{1mm} 8)$ 
          
          \item For any subset $S \subset \{1, 2, \ldots , n-1\}$, $a_{n} > \text{ rad }\big(\prod_{j \in S} a_{j}\big)$. 
          
      \end{itemize}
      
      Any integer $a_{n_{0}}$ that satisfies above conditions is unique modulo $(8p_{1} \cdots p_{M})$, as a consequence of the Chinese remainder theorem. In other words, if $a_{n_{0}}$ satisfies above congruences then any integer in the arithmetic progression $\big(8p_{1}\cdots p_{M}\big) \mathbb{N} + a_{n_{0}} \big)$ also satisfies those congruences.  
      
      Since $a_{n_{0}}$ is odd and $(c_{i}, p_{i}) = 1$ for $i = 1, 2, \ldots, M$, $\big(a_{n_{0}}, 8p_{1}\cdots p_{M}\big)$ is square-free. Hence the set of square-free natural numbers $a_{n}$ that satisfy above congruences is of positive density, as a consequence of Proposition $2$. We choose a square-free $a_{n}$ such that for any $S \subset \{ 1, 2, \ldots , n-1\}$:
      
      \begin{gather*}
          a_{n} > \text{rad }\big(\prod_{j \in S} a_{j}\big)
      \end{gather*}

      \item Define $f_{n}(x) := (x^{2} - a_{1}) (x^{2} - a_{2}) \cdots (x^{2} - a_{n})$. 
\end{enumerate}

\section{Proof that $f_{n}(x)$ is Intersective}
In this section, we shall prove that $f_{n}(x)$ has roots modulo every integer, by showing that $f_{n}(x)$ satisfies the conditions in Proposition $1$.

\begin{enumerate}
    \item Condition $1(a)$ of Proposition $1$ holds for $f_{n}(x)$ because $\prod_{j\in T} a_{j}$ is a perfect square, where

\begin{equation*}
    T = \begin{cases}
    
    \{1, 2, \ldots , n-2, n-1\} & \text{ ; if $n$ is even}\\
    
    \{1, 2, \ldots , n-4, n-3, n-1 \} & \text{ ; if $n$ is odd.}
    
    \end{cases}
\end{equation*}

This follows from Step $5$ of Section $3$ and Lemma $1$. 

   \item Let $j \in T $ and $p$ be any odd prime dividing $a_{j}$, then from the step $6$ of Section $3$ it follows that $p = p_{j}$ for some $1 \leq j \leq M$. It also follows from steps $4$ and $6$ of Section $3$ that: 
   
   \begin{equation*}
      \begin{cases}
         \big(\frac{a_{n}}{p}\big) = +1 & \text{if $j \neq 1$ or $n$ is even}\\
         
         \big(\frac{a_{n-2}}{p}\big) = +1 & \text{if $j = 1$ and $n$ is odd.}
    
      \end{cases}
   \end{equation*}
   
   Therefore the condition $1(b)$ of Proposition $1$ is also satisfied for $f_{n}(x)$. 
   
   \item The condition $2$ of the Proposition $1$ is satisfied for $f_{n}(x)$ because $a_{n}$ is chosen to be square-free and equivalent to $1$ modulo $8$ (in Step $6$ of Section $3$). 
\end{enumerate}

\section{Proof that $f_{n}(x)$ is Minimally Intersective}
In this section, we shall prove that if we remove any quadratic factors from $f_{n}(x)$, the resulting polynomial $g_{n}(x)$ will fail to be intersective. To show that $g_{n}(x)$ is not intersective, we will show that $g_{n}(x)$ fails to satisfy the necessary conditions in Proposition $1$. We will separate the proofs into parts according to the quadratic factor that is being removed from $f_{n}(x)$.

\begin{itemize}
    \item\underline{\textbf{Removing $(x^{2} - a_{r})$ for any $1 \leq r \leq (n-3)$}}
    
    In Section $3$, we chose the square-free integers $a_{1}, a_{2} \ldots , a_{r-1}, a_{r+1}, \ldots , a_{n-2}, a_{n}$ such that they satisfy the assumption of Lemma $4$. Therefore, for any subset $S \subset \{ 1, 2, \ldots , r-1, r+1, \ldots , n-2, n\}$, $\prod_{j \in S} a_{j}$ cannot a perfect square. 
    
    Hence, if there exists a set $S \subset \{1, 2, \ldots , r-1, r+1, \ldots , n\}$ such that $\prod_{j \in S} a_{j}$ is a perfect square, then $(n-1) \in S$. If $n \in S$, then by Lemma $1$, we must have that $a_{n} = \text{rad }\big(\prod_{j \in S, j\neq n} a_{j}\big)$, which is a contradiction to the way $a_{n}$ was chosen in the step $6$ of Section $3$.
    
    Therefore $n-1 = $ max $S$ and by Lemma $1$ we have that 
    
    \begin{equation*}
    a_{n-1} = \text{ rad } \big(\prod_{j \in S, j \neq (n-1)} a_{j}\big).
    \end{equation*}
    
    This, along with Step $5$ of Section $3$ implies 
    \begin{equation*}
        \text{ rad } \big(\prod_{j \in S, j \neq (n-1)} a_{j}\big) = \begin{cases}
        
        \text{rad } \big(\prod_{i=1}^{n-2} a_{j}\big) & \text{ ; if $n$ is even}\\
        
        \text{rad } \big(\prod_{i=1}^{n-3} a_{j}\big) & \text{ ; if $n$ is odd.}
        
        \end{cases}
      \end{equation*}
      
      Hence we obtain that
      
      \begin{equation*}
          \begin{cases}
             \prod_{j \in S, j \neq (n-1)} a_{j} \times \prod_{i=1}^{n-2} a_{j} \text{ is a perfect square} & \text{ ; if $n$ is even}\\
             
             \prod_{j \in S, j \neq (n-1)} a_{j} \times \prod_{i=1}^{n-3} a_{j} \text{ is a perfect square} & \text{ ; if $n$ is odd.}
        
          \end{cases}
      \end{equation*}
      
      Since $r \not\in S$ but $r \in \{1, 2, \ldots, n-3\}$, the two factors above are not equal, regardless of whether $n$ is odd or even. Therefore, we could disregard all $a_{j}$ appearing in both the multiplicands above. 
      
      After disregarding the common $a_{j}$ appearing in both the multiplicands, we are still left with a product of distinct $a_{j}$ that is a perfect square. Let $r_{0}$ be the largest such that $a_{r_{0}}$ remaining in this product. Then using Lemma $1$ we have that
      \begin{equation*}
      a_{r_{0}} = \text{ rad }  \big(a_{j_{1}} \times \cdots \times a_{j_{l}}\big),
      \end{equation*}
    where $j_{1}, \ldots , j_{l} \in \{1, 2, \ldots , r_{0} - 1 \}$. Since $a_{r_{0}}$ was chosen from product of $a_{j}$ with $j \leq (n-2)$, we have that $r_{0} \leq (n-2)$. This is a contradiction to how $a_{1}, a_{2}, \ldots , a_{n-2}$ were chosen the steps $1, 2$, $3$ and $4$ of Section $3$. Specifically, for $i \leq n-2$, we had chosen $a_{i}$ to be greater than square-free part of any sub-product of $a_{1}, \ldots , a_{i-1}$.
    
    Therefore if we remove $(x^{2} - a_{r})$ for any $1 \leq r \leq (n-3)$ then for the resulting polynomial $g_{n}(x):= \frac{f_{n}(x)}{(x^{2} - a_{r})}$, the condition $1(a)$ of Proposition $1$ will not be satisfied. Hence, $g_{n}(x)$ is not intersective.

    \item\underline{\textbf{Removing $(x^{2} - a_{n-2})$ from $f_{n}(x)$}}
    
    If $n$ is even, then removing $(x^{2} - a_{n-2})$ from $f_{n}(x)$ will again result in a polynomial $g_{n}(x):= \frac{f_{n}(x)}{(x^{2} - a_{n-2})}$ not satisfying the condition $1(a)$ of Proposition $1$. The proof of this is exactly analogous to the previous case of removing $(x^{2} - a_{r})$ for $1 \leq r \leq (n-3)$. Therefore, we assume that $n$ is odd.
    
    \begin{itemize}
        \item Since $\big(\frac{a_{1}}{p_{1}}\big) = 0 \neq +1$, $(x^{2} - a_{1}) \equiv 0 \hspace{1mm} (\text{mod}\hspace{1mm} p_{1}^{2})$ is not solvable. This follows from Lemma $2$. 
        
        \item Since for any $2 \leq i \leq n-3$ or $i = n$ $\big(\frac{a_{i}}{p_{1}}\big) = -1 \neq +1$ from step $2, 3$ and $6$ of Section $3$, $(x^{2} - a_{i}) \equiv 0 \hspace{1mm} (\text{mod}\hspace{1mm} p_{1}^{2})$ is not solvable . This again follows from Lemma $2$. 
        
        \item $(x^{2} - a_{n-1}) \equiv 0 \hspace{1mm} (\text{mod}\hspace{1mm} p_{1}^{2})$ is not solvable because $\big(\frac{a_{n-1}}{p_{1}}\big) = 0 \neq +1$. This follows  from Lemma $2$ again. 
    \end{itemize}
    
    So for the resulting polynomial \begin{gather*} 
    g_{n}(x) = (x^{2} - a_{1}) \cdots (x^{2} - a_{n-3}) (x^{2} - a_{n-1}) (x^{2} - a_{n})
    \end{gather*}
    the congruence 
    \begin{gather*}
    g_{n}(x) \equiv 0 \hspace{1mm} (\text{mod}\hspace{1mm} p_{1}^{2(n-1)})
    \end{gather*}
    is not solvable. This last assertion follows from Lemma $3$ for $k = 2$, $m = (n-1)$ and $p = p_{1}$. Hence, $g_{n}(x)$ is not intersective.\\
    
    \item\underline{\textbf{Removing $(x^{2} - a_{n-1})$ from $f_{n}(x)$}}
    
    In this case, we note that $a_{1}, \ldots , a_{n-2}, a_{n}$ satisfies the hypothesis of Lemma $4$ and hence for every subset $S \subset \{ 1, 2, \ldots , n-2, n \}$, the product $\prod_{j \in S} a_{j}$ is not a perfect square. 
    
    Therefore the polynomial $g_{n}(x):= \frac{f_{n}(x)}{(x^{2} - a_{n-1})}$ does not satisfy the condition $1(a)$ of Proposition $1$ and hence is not intersective.\\
    
    \item\underline{\textbf{Removing $(x^{2} - a_{n})$ from $f_{n}(x)$}}
    
    If $n$ is even, then note the following. 
    \begin{itemize}
        \item Since $\big(\frac{a_{1}}{p_{1}}\big) = 0 \neq +1$, $(x^{2} - a_{1}) \equiv 0 \hspace{1mm} (\text{mod}\hspace{1mm} p_{1}^{2})$ is not solvable. This follows from Lemma $2$. 
        
        \item Since for any $2 \leq i \leq n-2$ $\big(\frac{a_{1}}{p_{1}}\big) = -1 \neq +1$ from step $3$ and $4$ of Section $3$, $(x^{2} - a_{i}) \equiv 0 \hspace{1mm} (\text{mod}\hspace{1mm} p_{1}^{2})$ is not solvable . This follows again from Lemma $2$.  
        
        \item $(x^{2} - a_{n-1}) \equiv 0 \hspace{1mm} (\text{mod}\hspace{1mm} p_{1}^{2})$ is not solvable because $\big(\frac{a_{n-1}}{p_{1}}\big) = 0 \neq +1$. This follows again from Lemma $2$. 
    \end{itemize}
    
    Therefore, for the resulting polynomial $g_{n}(x) = (x^{2} - a_{1}) \cdots  (x^{2} - a_{n-2}) (x^{2} - a_{n-1})$ we have that the congruence
    \begin{gather*}
       g_{n}(x) \equiv 0 \hspace{1mm} (\text{mod}\hspace{1mm} p_{1}^{2(n-1)}) 
    \end{gather*}
    is not solvable  This last assertion follows from Lemma $3$ for $k = 2$, $m = (n-1)$ and $p = p_{1}$; hence, $g_{n}(x)$ is not intersective.
    
    If $n$ is odd, then we have the following implications. 
    \begin{itemize}
        \item Since $\big(\frac{a_{1}}{p_{2}}\big) = 0 \neq +1$, $(x^{2} - a_{1}) \equiv 0 \hspace{1mm} (\text{mod}\hspace{1mm} p_{2}^{2})$ is not solvable. This follows from Lemma $2$. 
        
        \item Since for any $2 \leq i \leq n-2$ $\big(\frac{a_{1}}{p_{2}}\big) = -1 \neq +1$ from step $3$ and $4$ of Section $3$, $(x^{2} - a_{i}) \equiv 0 \hspace{1mm} (\text{mod}\hspace{1mm} p_{2}^{2})$ is not solvable. This follows again from Lemma $2$.  
        
        \item $(x^{2} - a_{n-1}) \equiv 0 \hspace{1mm} (\text{mod}\hspace{1mm} p_{2}^{2})$ is not solvable because $\big(\frac{a_{n-1}}{p_{2}}\big) = 0 \neq +1$. This follows again from Lemma $2$. 
    \end{itemize}
    
    Therefore, when $n$ is odd, for  the resulting polynomial
    \begin{gather*}
        g_{n}(x) = (x^{2} - a_{1}) \cdots  (x^{2} - a_{n-2}) (x^{2} - a_{n-1})
    \end{gather*}
    we have that the congruence 
    \begin{gather*}
        g_{n}(x) \equiv 0 \hspace{1mm} (\text{mod}\hspace{1mm} p_{2}^{2(n-1)})
    \end{gather*}
    is not solvable. This last assertion again follows from Lemma $3$ for $k = 2$, $m = (n-1)$ and $p = p_{2}$; hence $g_{n}(x)$ is not intersective.

\end{itemize}

\section{Some Examples}

We shall construct an explicit example of minimally intersective $f_{4}(x)$ and then another of a minimally intersective $f_{5}(x)$. 

\subsection{An Example of Minimally Intersective $f_{4}(x)$}

\begin{enumerate}
    \item We pick $p_{1} = $3 and $p_{2} = 5$. And, we pick $c_{1} = 1 \in Q_{3}$, $b_{1} = 2 \not\in Q_{3}$, $c_{2} = 1 \in Q_{5}$ and $b_{2} = 2 \not\in Q_{5}$. We define $a_{1} = p_{1}p_{2} = 15$.
    
    \item Now we pick a square-free integer $ a_{2} > 15$ such that $a_{2} \equiv 2 \hspace{1mm} (\text{mod}\hspace{1mm} 3)$ and $a_{2} \equiv 2 \hspace{1mm} (\text{mod}\hspace{1mm} 5)$. We pick $a_{2} = 17$.
    
    \item Since $n = 4$ is even and $n - 1 = 3$, we take $a_{3} =$ rad $(a_{1} \times a_{2})$ = rad $( 15\times 17) = 255$. 
    
    \item Now we take all the primes $p_{1} = 3, p_{2} = 5, p_{3} = 17$ that divides any one of the $a_{1}, a_{2}, a_{3}$. Then for every $1 \leq j \leq 3$, we take $c_{j} \in Q_{p_{j}}$. Here we take $c_{1} = 1$, $c_{2} = 1$ and $c_{3} = 2$. 
    
    Then we solve for a square-free $a_{4}$ such that  $a_{4} > \text{rad }(\prod_{J} a_{j})$ for every $J\subseteq\{1, 2, 3\}$, $a_{4} \equiv c_{j} \hspace{1mm} (\text{mod}\hspace{1mm} p_{j})$ for every $j = 1, 2, 3$ and $a_{4} \equiv 1 \hspace{1mm} (\text{mod}\hspace{1mm} 8)$.
    
    By Proposition $2$, infinitely many such square-free $a_{4}$ exists. We choose $a_{4} = 2161 $, which is a prime and hence square-free. Therefore,
    \begin{gather*}
    f_{4}(x) = (x^{2} - 15) (x^{2} - 17) (x^{2} - 255) (x^{2} - 2161)
    \end{gather*}
    is minimally intersective. 
    
\end{enumerate}

\subsection{An Example of Minimally Intersective $f_{5}(x)$}

\begin{enumerate}

    \item As in $6.1$, take $p_{1} = 3$, $p_{2} = 5$, $c_{1} = 1 \in Q_{3}$, $c_{2} = 1 \in Q_{5}$, $b_{1} = 2 \not\in Q_{3}$, $b_{2} = 2 \not\in Q_{5}$, $a_{1} = 15 $ and $a_{2} = 17 $. 
    
    \item Pick a square-free integer $ a_{3} > 15 \times 17 = 255 $ such that $a_{2} \equiv 2 \hspace{1mm} (\text{mod}\hspace{1mm} 3)$ and $a_{2} \equiv 2 \hspace{1mm} (\text{mod}\hspace{1mm} 5)$. We choose $a_{3} = 557$, which is a prime and hence square-free.
    
    \item Define $a_{4}$ to be rad $(a_{1} \times a_{2}) = 15 \times 17 = 255 $ and hence $a_{4} = 255$.
    
    \item Pick all the odd primes $p_{1} = 3, p_{2} = 5, p_{3} = 17, p_{4} = 557$ that divide any one of the $a_{1}, a_{2}, a_{3}, a_{4} $. Then take $c_{1} = 1$, $c_{2} = 1$, $c_{3} = 2$ and $c_{4} = 6$ which are in $Q_{p_{j}}$ for $j = 1, 2, 3, 4$ respectively.
    
    Now choose a square-free $ a_{5} $ such that $a_{5} > \prod_{J} a_{j}$ for all $J \subset\{1, 2, 3, 4\}$, $a_{5} \equiv c_{j} \hspace{1mm} (\text{mod}\hspace{1mm} p_{j})$ for $j = 1, 2, 3, 4$ and $a_{5} \equiv 1 \hspace{1mm} (\text{mod}\hspace{1mm} 8)$.
    
    Any such $a_{5}$ has to be of the form $587641 + 142035k$ for some $k \in\mathbb{Z}$. We take $a_{5} = 587641 + 142035 (2) = 871711 $, which is square-free since its prime-factorization is $29 \times 30059$. Therefore, 
    \begin{gather*}
    f_{5}(x) = (x^{2} - 15) (x^{2} - 17) (x^{2} - 557) (x^{2} - 255) (x^{2} - 871711)
    \end{gather*}
    is minimally intersective. 
\end{enumerate}

\end{document}